\newcommand{\sqrtpi}{\sqrt{\pi}}
\newcommand{\twoOverSqrtpi}{\frac{2}{\sqrtpi}}
\newcommand\Arg[1]{{\rm Arg}\,#1}
\renewcommand\Re[1]{{\rm Re}\, #1}
\newcommand\erf[1]{{\rm erf}\, #1}
\newcommand\erfc[1]{{\rm erfc}\, #1}
\newcommand{\cee}{\mathbb{C}}
\newcommand{\at}{\makeatletter @\makeatother}
\newtheorem{proposition}{Proposition}
\title{The real part of the complementary error function}
\author{Yossi Lonke}
\date{}							
\begin{document}
\begin{titlepage}
  \maketitle
  \begin{abstract}
It is shown that the real part of the complementary error function
    $$\erfc(z)=\twoOverSqrtpi\int_z^{\infty}e^{-u^2}\,du$$
    satisfies $\Re(\erfc(z))\geq 1$ in the set $S=\{z:|\Arg z|\geq 3\pi/4\}$. This improves a previous result asserting that $\erfc(z)$ has no zeros in the set~$S$. 
   \end{abstract}
   \vspace{1 cm}
   Keywords: Error function, Complementary Error function
\vfill
Yossi Lonke\\
yossilonke\at me.com \\
Independent Research \\
13 Basel Street, Tel-Aviv ,Israel\\
Orcid Id: 0000-0001-7493-5085\\
\end{titlepage}

\section*{}

In \cite{EL2008}, it was shown that the complementary error function, namely,
\begin{equation}\label{eq:erfc}
\erfc(z)=\twoOverSqrtpi\int_z^{\infty}e^{-u^2}\,du
\end{equation}
does not vanish in the set
$$S=\{z\in\cee: |\Arg z|\geq\frac{3\pi}{4}\}$$
(ibid. Theorem~1). Here $\Arg z$ denotes the principal argument in the complex plane, i.e. the angle between the complex number $z$ and the positive real axis, taking values in the interval $(-\pi,\pi]$. The proof presented in \cite{EL2008} is essentially a
real-analysis argument. In particular, no use was made of the fact that the complementary error function is analytic.
By combining of a result from \cite{strand1965} and some elementary complex analysis,  we obtain a somewhat simpler proof than the one in \cite{EL2008} , of the following stronger result.
\begin{proposition} For every $z\in S$ one has
\begin{equation}\label{eq:prop}
    \Re(\erfc(z))\geq 1
\end{equation}
Equality holds if and only if $z=0$.
\end{proposition}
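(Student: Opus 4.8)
The plan is to transport \eqref{eq:prop}, via an elementary reflection, to a pointwise bound on $|\erfc|$ over a $45^{\circ}$ sector, and then read \eqref{eq:prop} off from that bound. Two facts are used at the outset. First, the identity $\erfc(z)+\erfc(-z)=2$, which is immediate from \eqref{eq:erfc} and the evenness of $e^{-u^{2}}$ (the contributions at $z$ and $-z$ cancel, leaving $\twoOverSqrtpi\int_{-\infty}^{\infty}e^{-u^{2}}\,du=2$). Second, multiplication by $-1$ rotates $S$ precisely onto the closed sector
$$\Sigma=\{\,w\in\cee:\ |\Arg w|\le\pi/4\,\}\cup\{0\},$$
so $z\in S$ iff $w:=-z\in\Sigma$. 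Granting these, for $z\in S$ one has $\Re(\erfc(z))=2-\Re(\erfc(-z))\ge 2-|\erfc(-z)|$, and the whole proposition reduces to showing that $|\erfc(w)|\le 1$ for every $w\in\Sigma$, with strict inequality unless $w=0$.

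That sectorial bound is the one substantive step, and I expect it is essentially the result borrowed from \cite{strand1965}; in any case it follows directly from the representation
$$\erfc(w)=\twoOverSqrtpi\int_{0}^{\infty}e^{-(w+t)^{2}}\,dt\qquad(\Re w\ge 0),$$
which one gets either by sliding the path in \eqref{eq:erfc} onto the horizontal ray $w+[0,\infty)$, or, faster, by noting that the right-hand side has $w$-derivative $-\twoOverSqrtpi\,e^{-w^{2}}$ and value $1$ at $w=0$. On $\Sigma$ one has $\Re(w)\ge 0$ and $\Re(w^{2})=|w|^{2}\cos(2\Arg w)\ge 0$, hence for $t\ge 0$
$$\Re((w+t)^{2})=\Re(w^{2})+2t\,\Re(w)+t^{2}\ \ge\ t^{2},$$
so $|e^{-(w+t)^{2}}|\le e^{-t^{2}}$ and $|\erfc(w)|\le\twoOverSqrtpi\int_{0}^{\infty}e^{-t^{2}}\,dt=1$. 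For the strict inequality when $w\ne 0$: either $w$ is interior to $\Sigma$, so $\Re(w^{2})>0$, or $w$ lies on a bounding ray $\Arg w=\pm\pi/4$, so $\Re(w)>0$; in both cases the displayed inequality is strict on a set of positive $t$-measure, forcing $|\erfc(w)|<1$, and in particular $\erfc(w)\ne 1$.

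Assembling the pieces: for $z\in S$ with $z\ne 0$ we get $\Re(\erfc(z))\ge 2-|\erfc(-z)|>1$, while $\erfc(0)=1$ gives equality exactly at $z=0$. The only place where any real work is needed is the bound $|\erfc(w)|\le 1$ on $\Sigma$; the reflection identity, the rotation $-S=\Sigma$, and the strictness bookkeeping are all routine. It is perhaps worth flagging that the sector is genuinely needed: $|\erfc|$ already exceeds $1$ at some points of the imaginary axis, so one cannot replace $\Sigma$ by the whole right half-plane.
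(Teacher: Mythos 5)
Your proof is correct, and it takes a genuinely different route from the paper's. The paper, like you, reduces matters to the sector $\{|\Arg w|\le\pi/4\}$ via the oddness of $\erf$, but then argues in two cases: for $\Re w>\sqrt{2/\pi}$ it invokes Strand's estimate \eqref{strand} to get $|1-\erf(w)|<1$, while for $\Re w\le\sqrt{2/\pi}$ it integrates $e^{-u^2}$ along a segment followed by an arc of the hyperbola $x^2-y^2=a^2$, where the bound $2t\sqrt{t^2-a^2}\le 4/\pi<\pi/2$ keeps the sine and cosine in \eqref{realPart} non-negative, giving $\Re(\erf(w))\ge 0$ directly. You instead prove the single pointwise bound $|\erfc(w)|\le 1$ on the sector, using the ray representation $\erfc(w)=\twoOverSqrtpi\int_0^\infty e^{-(w+t)^2}\,dt$ together with $\Re(w)\ge 0$ and $\Re(w^2)\ge 0$, and then conclude through the reflection identity $\erfc(z)+\erfc(-z)=2$. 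This removes the case split and the dependence on \cite{strand1965} altogether (your ray representation is essentially the source of Strand's bound anyway), and it proves a stronger intermediate fact --- $|1-\erf(w)|\le 1$, not merely $\Re(\erf(w))\ge 0$, on the sector --- while the equality case falls out in one stroke from your strictness bookkeeping (interior points: $\Re(w^2)>0$; boundary rays: $\Re(w)>0$), which is sound. The one point to make fully explicit in a final write-up is the justification of the representation $\erfc(w)=\twoOverSqrtpi\int_0^\infty e^{-(w+t)^2}\,dt$ (differentiation under the integral sign, or the contour shift with the decay of $e^{-u^2}$ on distant vertical segments); both of your suggested arguments work and are routine. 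Your closing remark that $|\erfc|$ exceeds $1$ on the imaginary axis, so the sector cannot be enlarged to the half-plane, is also correct.
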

\begin{proof}
It clearly suffices to show that  $\erf(z)=1-\erfc(z)$ has a non-negative real part in~$S$,
where
\begin{equation}\label{eq:erf}
\erf(z)=\twoOverSqrtpi\int_0^ze^{-u^2}\,du
\end{equation}
Since $\erf(-z)=-\erf(z)$, this is equivalent to showing that $\Re(\erf(z))\geq 0$ in the set
$$T=-S=\{z\in\cee:|\Arg z|\leq\pi/4\}$$ 
The following estimate for the complementary error function was derived in \cite{strand1965},  (ibid. (10))
\begin{equation}\label{strand}
|\erfc(x+iy)|<\frac{e^{y^2-x^2}}{x\sqrtpi}\sqrt{1+y^2/x^2}\quad (x,y>0)
\end{equation}
Since $\erfc(\bar{z})=\overline{\erfc(z)}$ for all $z\in\cee$, the estimate (\ref{strand}) holds for $y<0$ as well.
If ${z=x+iy\in T}$, then $y^2\leq x^2$, and if also $x>\sqrt{2/\pi}$ then (\ref{strand}) implies that ${|1-\erf(z)|<1}$, and in particular $\Re(\erf(z))>0$.

Consider therefore a complex number $z=x+iy\in T$ such that ${x\leq\sqrt{2/\pi}}$, and  we may further assume that ${y\geq 0}$, because $\erf(\bar{z}),\erf(z)$ are complex conjugates having the same real part. Put $a^2=x^2-y^2$. Since $\erf z$
is an entire function, the integral defining $\erf z$, namely (\ref{eq:erf}),
does not depend on any particular choice of a continuous path with which we choose to connect the origin to $z$. Choose a path composed of a segment connecting the origin to the point $(a,0)$, followed by portion of the curve $x^2-y^2=a^2$, connecting the point $(a,0)$ to the point $(x,y)$. A parametrization $\gamma:[0,x]\to T$ is given by
\begin{equation}\label{eq:gammaPath}
\gamma(t)=
\left\{
\begin{aligned}
&t,& 0\leq t\leq a,\\
&t+i\sqrt{t^2-a^2},& a\leq t\leq x\\
\end{aligned}
\right.
\end{equation}
Evaluating the integral (\ref{eq:erf}) along the first part of $\gamma(t)$ gives 
\begin{equation}\label{easyPart}
\twoOverSqrtpi\int_0^ae^{-u^2}\,du=\erf(a)
\end{equation}
Along the second part of the path, the corresponding integral is
$$
\twoOverSqrtpi\int_{a}^{x}e^{-(t+i\sqrt{t^2-a^2})^2}(1+i\frac{t}{\sqrt{t^2-a^2}})\,dt
$$
The real part of this integral is
\begin{equation}\label{realPart}
\frac{2e^{-a^2}}{\sqrtpi}\int_a^{x}(\cos 2t\sqrt{t^2-a^2}+\sin 2t\sqrt{t^2-a^2}\frac{t}{\sqrt{t^2-a^2}})\,dt
\end{equation}
By hypothesis, $x\leq\sqrt{2/\pi}$. Hence
$$0\leq 2t\sqrt{t^2-a^2}\leq 4/\pi<\pi/2\qquad (a\leq t\leq x)$$
Since both sine and cosine are non-negative in $[0,\pi/2]$, the integral in (\ref{realPart}) is non-negative. Since $\Re(\erf(z))$ is the sum of (\ref{easyPart}) and (\ref{realPart}), this completes the proof that $\Re(\erf(z))\geq 0$ for all $z\in T$.

In case of equality in (\ref{eq:prop}) for
some $z=x+iy\in S$, we switch to ${-z\in T}$, and note that the considerations above imply that $|x|\leq\sqrt{2/\pi}$. As a result, $\Re(\erf(z))=0$ implies that both integrals 
(\ref{easyPart}) and (\ref{realPart}), (the latter with $x$ replaced by $-x=|x|$) must vanish, which forces $x=y=0$.
This completes the proof of the proposition.
\end{proof}
\noindent{\bf Remark} Note that if $x=y$ then $a=0$. In that case the path $\gamma$ reduces to the line segment connecting the origin
to the point $(x,x)$, and the integral (\ref{realPart}) reduces to an integral that was analysed in \cite{EL2008} by using other methods. The proof above, however, remains unchanged. 

\end{document}